\newtheorem{definition}{Definition}[section]
\newtheorem{theorem}[definition]{Theorem}
\newtheorem{lemma}[definition]{Lemma}
\newtheorem{remark}[definition]{Remark}
\newtheorem{example}[definition]{Example}
\newtheorem{corollary}[definition]{Corollary}
\numberwithin{equation}{section}
\title{Common Fixed Points for Weakly Compatible Mappings via Bivariate Auxiliary Functions}
\author{Babu G.V.R\textsuperscript{1}, Alemayehu G. Negash\textsuperscript{2,*}, Meaza F. Bogale \textsuperscript{3}\\
\small \textsuperscript{1}Department of Mathematics, Andhra University, India\\
\small \textsuperscript{1}Email: \texttt{gvr\_babu@hotmail.com} \\
\small \textsuperscript{2,3}Department of Mathematics, Hampton University, USA\\
$^*$ \textit{Corresponding author.} \\
\small \textsuperscript{\dag}Email: \texttt{alemayehu.negash@hamptonu.edu}\\
\small \textsuperscript{\ddag}Email: \texttt{meaza.bogale@hamptonu.edu}
}
\begin{document}

\maketitle

\begin{abstract}
We establish common fixed point theorems for two pairs of weakly compatible self-mappings using an auxiliary function of two variables. Unlike classical results, our theorems do not assume continuity of the mappings and require completeness of only one image set. The use of two-variable auxiliary functions allows us to unify and extend various existing fixed point theorems in metric spaces.

\textbf{Key words and phrases:} Weakly compatible maps, auxiliary function, common fixed point.

\textbf{AMS(2000) Mathematics Subject Classification:} 47H10, 54H25.
\end{abstract}

\section{Introduction}
The method of altering distances between points, introduced by Khan et al. \cite{Khan1984}, initiated a new approach in fixed point theory. This idea was later generalized by numerous researchers [\cite{Babu2004,Babu2007,Babu2001,Babu2007b}, \cite{Choudhury2005,Choudhury2000}, \cite{Naidu2001,Naidu2003}, \cite{Park1980}, \cite{Pathak1994}, \cite{Sastry1999,Sastry1998,Sastry2003,Sastry2000}]. In particular, Choudhury and Dutta  \cite{Choudhury2000} introduced altering distance functions involving two variables and obtained important fixed point results in metric spaces. These generalized functions serve as auxiliary tools in establishing contraction-type conditions.

\begin{definition}[\cite{Khan1984}]
A function $\phi:\mathbb{R}_{+}\to \mathbb{R}_{+}$ is called an altering distance function if the following properties are satisfied:
\begin{enumerate}
    \item[(a)] $\phi(t)=0$ if and only if $t=0$; and
    \item[(b)] $\phi$ is continuous and monotonically increasing; where $\mathbb{R}_{+}=[0,\infty)$.
\end{enumerate}
\end{definition}

\begin{definition}[\cite{Choudhury2000}]
A function $\psi:\mathbb{R}_{+}\times \mathbb{R}_{+}\to \mathbb{R}_{+}$ is said to satisfy Condition-A if
\begin{enumerate}
    \item[(i)] $\psi$ is continuous;
\end{enumerate}
\end{definition}


\begin{enumerate}
    \item[(ii)] $\psi$ \textit{is monotone increasing in each of its arguments;}

    \item[(iii)] $\psi(0,0)=0$ \textit{and} $\psi(t,0)=0$ \textit{implies} $t=0$.
\end{enumerate}

Throughout this paper we denote

\[
\Psi=\{\psi\,|\,\psi:\mathbb{R}_{+}\times\mathbb{R}_{+}\to\mathbb{R}_{+} \text{ satisfies Condition-A}\}
\]

We call an element $\psi\in\Psi$ \textit{an auxiliary function of two variables}.

\begin{remark} \label{rem:1.3}
If $\psi(s,t)=0$, then $s=0$.
\end{remark}

\begin{example}
The following are examples of $\psi$.

Let $\psi:\mathbb{R}_{+}\times\mathbb{R}_{+}\to\mathbb{R}_{+}$ defined by

\begin{enumerate}
    \item[(i)] $\psi(s,t)=s^{p}+t^{q}$, where $p>0$, $q>0$,
    
    \item[(ii)] $\psi(s,t)=s^{p}t^{q}+t^{r}$, where $p>0$, $q\geq 0$, $r>0$,
    
    \item[(iii)] $\psi(s,t)=\max\{s^{p},t^{q}\}$, where $p>0$, $q>0$, and
    
    \item[(iv)] $\psi(s,t)=(s^{p}+r\cdot t^{q})^{\lambda}$, where $p,\lambda>0$, $q,r\geq 0$.
\end{enumerate}
\end{example}

\begin{remark}
The auxiliary function $\psi(s,t) = (s^p + r t^q)^\lambda$ generalizes several common forms:
\begin{itemize}
\item For $p = \lambda = 1$, $r = q = 0$: $\psi$ reduces to $d(Sx,Ty)$
\item For $r = 0$: $\psi$ becomes a power-scaling as in \cite{Rhoades1977}
\end{itemize}
This flexibility unifies prior work under one framework.
\end{remark}

\begin{theorem}[\cite{Choudhury2000}]
Let $T:X\to X$ be a selfmap of a complete metric space $X$. Assume that there exist $\psi\in\Psi$, $0<r<1$ and $0<s\leq 1$ such that

\[
\psi(d(Tx,Ty),d(x,Tx)) + \psi(d(y,Ty),d(y,T^{2}x)) \leq r\psi(d(x,y),d(x,Tx)) + s\psi(d(y,Ty),d(y,Tx)),
\]
where $x,y\in X$. Then $T$ has a unique fixed point in $X$.
\end{theorem}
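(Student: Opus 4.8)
The plan is to run the Picard iteration and show it produces a Cauchy sequence whose limit is the desired fixed point. Fix any $x_0\in X$, set $x_{n+1}=Tx_n$, and write $d_n=d(x_n,x_{n+1})$. The first step is to substitute $x=x_{n-1},\,y=x_n$ into the contractive inequality. Since $y=Tx=x_n$ makes $d(y,Tx)=0$ and $T^2x=x_{n+1}=Ty$, the inequality collapses to $\psi(d_n,d_{n-1})+\psi(d_n,d_n)\le r\,\psi(d_{n-1},d_{n-1})+s\,\psi(d_n,0)$. Using that $\psi$ is monotone increasing in its second argument together with $s\le 1$, the term $s\,\psi(d_n,0)$ is dominated by $\psi(d_n,d_{n-1})$, which cancels the like term on the left and leaves the clean recursion $\psi(d_n,d_n)\le r\,\psi(d_{n-1},d_{n-1})$. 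Iterating gives $\psi(d_n,d_n)\le r^n\psi(d_0,d_0)\to 0$, and then Condition-A forces $d_n\to 0$: if some subsequence stayed above $\varepsilon>0$, monotonicity would give $\psi(d_{n_k},d_{n_k})\ge\psi(\varepsilon,0)>0$ by property (iii), contradicting $\psi(d_n,d_n)\to 0$.

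Next I would prove $\{x_n\}$ is Cauchy by the standard two-index argument, which I expect to be the main obstacle. Assuming it fails, I extract $\varepsilon>0$ and indices $m_k>n_k\ge k$ with $d(x_{m_k},x_{n_k})\ge\varepsilon$ but $d(x_{m_k-1},x_{n_k})<\varepsilon$; the triangle inequality and $d_n\to 0$ then pin down $d(x_{m_k},x_{n_k})\to\varepsilon$, and similarly the shifted distances $d(x_{n_k+1},x_{m_k+1})$, $d(x_{m_k},x_{n_k+1})$, and $d(x_{m_k},x_{n_k+2})$ all tend to $\varepsilon$. Substituting $x=x_{n_k},\,y=x_{m_k}$ and passing to the limit using continuity of $\psi$ yields $\psi(\varepsilon,0)+\psi(0,\varepsilon)\le r\,\psi(\varepsilon,0)+s\,\psi(0,\varepsilon)$, that is $(1-r)\psi(\varepsilon,0)+(1-s)\psi(0,\varepsilon)\le 0$. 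Since $r<1$ and $\psi(\varepsilon,0)>0$ while $s\le 1$, the left side is strictly positive, the contradiction. The delicate points are the minimal choice of $m_k$ and the bookkeeping that every relevant distance converges to the same value $\varepsilon$.

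With $\{x_n\}$ Cauchy, completeness of $X$ gives a limit $z$. To see $Tz=z$, I would substitute $x=x_n,\,y=z$, so the left side contains $\psi(d(x_{n+1},Tz),d_n)+\psi(d(z,Tz),d(z,x_{n+2}))$ and the right side $r\,\psi(d(x_n,z),d_n)+s\,\psi(d(z,Tz),d(z,x_{n+1}))$. Letting $n\to\infty$, writing $\delta=d(z,Tz)$, and using continuity, this becomes $2\psi(\delta,0)\le s\,\psi(\delta,0)$; since $s\le 1<2$ this forces $\psi(\delta,0)=0$, hence $\delta=0$ by property (iii), so $Tz=z$. Finally, for uniqueness I take two fixed points $z,w$, substitute $x=z,\,y=w$, and note that all self-distances vanish, reducing the inequality to $(1-r)\psi(D,0)+(1-s)\psi(0,D)\le 0$ with $D=d(z,w)$; positivity of $1-r$ again gives $\psi(D,0)=0$ and thus $D=0$, so $z=w$.
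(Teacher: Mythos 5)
Your proposal is correct: the substitution $x=x_{n-1}$, $y=x_n$ does collapse the hypothesis to $\psi(d_n,d_n)\le r\,\psi(d_{n-1},d_{n-1})$ after dominating $s\,\psi(d_n,0)$ by $\psi(d_n,d_{n-1})$, and the Cauchy, existence, and uniqueness steps all close as you describe. The paper itself states this theorem only as a cited background result with no proof, but your argument follows exactly the same Picard-iteration-plus-two-index-negation scheme that the paper deploys for its own Theorem~\ref{thm:main}, so there is nothing methodologically different to compare.
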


Let 

\[
\Phi=\Bigg\{\phi\,|\,\phi:\mathbb{R}_{+}\to\mathbb{R}_{+} \text{ satisfying }
\begin{cases}
\text{(i) $\phi$ is monotonically increasing and continuous;} \\
\text{(ii) $0<\phi(t)<t$ for $t>0$}
\end{cases}
\Bigg\}
\]

\begin{definition}[\cite{Jungck1986}]
Two mappings $A$ and $T$ of a metric space $(X,d)$ are compatible if $\lim_{n\to\infty}d(ATx_{n},TAx_{n})=0$ whenever $\{x_{n}\}$ is a sequence in $X$ such that $\lim_{n\to\infty}Ax_{n}=\lim_{n\to\infty}Tx_{n}=t$ for some $t\in X$.
\end{definition}

In 1991, \cite{Antony1991} introduced the concept of weak compatibility of a pair selfmaps of a metric space as a generalization of compatibility, as follows:

\begin{definition}[\cite{Antony1991}]
Let $A$ and $T$ be selfmaps on a metric space $(X,d)$. The ordered pair $(T,A)$ is said to be weak compatible if $\lim_{n\to\infty}ATx_{n}=Tt$ whenever $\{x_{n}\}$ is a sequence in $X$ such that $\lim_{n\to\infty}Ax_{n}=\lim_{n\to\infty}Tx_{n}=t$ and $\lim_{n\to\infty}TAx_{n}=\lim_{n\to\infty}TTx_{n}=Tt$ for some $t\in X$.
\end{definition}

\begin{example}[\cite{Antony1991}] \label{ex:1.8}
The following example shows that two selfmaps of a metric space can be weak compatible without being compatible.

Let $X=\mathbb{R}$ with the usual metric. Define $A,T:X\to X$ by

\[
T(x)=\left\{\begin{array}{ll}
\frac{10}{32} & \text{if } x<\frac{3}{8} \\ 
\frac{3}{8} & \text{if } \frac{3}{8}\leq x<\frac{1}{2} \\
1 & \text{if } x\geq\frac{1}{2}
\end{array}\right.
\quad\text{and}\quad 
A(x)=\left\{\begin{array}{ll}
\frac{11}{32} & \text{if } x<\frac{3}{8} \\ 
\frac{1+x}{4} & \text{if } \frac{3}{8}\leq x<\frac{1}{2} \\ 
\frac{1+x}{2} & \text{if } x\geq\frac{1}{2}
\end{array}\right.
\]
We observe that $A$ and $T$ are not compatible while the ordered pair $(T,A)$ is weak compatible.
\end{example}

\begin{theorem}[\cite{Sastry2003}] \label{thm:1.9}
Let $(X,d)$ be a complete metric space and $A$ and $T$ be selfmaps on $X$ such that $(T,A)$ is weak compatible and satisfy: for some $\psi\in\Psi$, $\phi\in\Phi$ and for all $x,y\in X$,
\begin{align*}
    \psi(d(Tx,Ty),d(Tx,Ty)) \leq &\phi(\max\{\psi(d(Ax,Ay),d(Tx,Ax)), \psi(d(Ax,Ay),d(Ty,Ay)), \\
&\psi(d(Tx,Ax),d(Ty,Ay)), \psi(d(Ty,Ay),d(Tx,Ax)), \\
&\psi(d(Tx,Ay),d(Tx,Ax)), 
\psi(d(Tx,Ay),d(Ty,Ay))\}).
\end{align*}

If $T(X)\subset A(X)$ and $T$ is continuous, then $A$ and $T$ have a unique common fixed point.
\end{theorem}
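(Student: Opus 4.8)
The plan is to run the usual two-map Picard iteration, prove the iterates form a Cauchy sequence, and then extract the common fixed point using continuity of $T$ together with weak compatibility (rather than continuity of $A$). Since $T(X)\subset A(X)$, starting from an arbitrary $x_0\in X$ I can choose $x_{n+1}$ with $Ax_{n+1}=Tx_n$ and set $y_n=Tx_n=Ax_{n+1}$. Applying the contractive hypothesis with $x=x_n$, $y=x_{n+1}$ and abbreviating $a_n=d(y_{n-1},y_n)$, $b_n=d(y_n,y_{n+1})$, the left-hand side becomes $\psi(b_n,b_n)$, while monotonicity of $\psi$ in both arguments shows each of the six terms on the right is dominated by $\psi(b_n,b_n)$ as soon as $b_n\ge a_n$. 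Because $\phi(t)<t$ for $t>0$ and $\psi(b_n,b_n)>0$ whenever $b_n>0$ (Remark \ref{rem:1.3} together with Condition-A), this forces $b_n<a_n$ whenever $b_n>0$; in all cases $\{b_n\}$ is non-increasing and converges to some $L\ge 0$. Passing to the limit in the same inequality — legitimate since $\psi$ and $\phi$ are continuous — the right-hand maximum tends to $\psi(L,L)$, so $\psi(L,L)\le\phi(\psi(L,L))$, which forces $L=0$.

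The main obstacle is the Cauchy estimate. I would argue by contradiction in the standard way: if $\{y_n\}$ is not Cauchy there are $\varepsilon>0$ and indices $m_k>n_k\ge k$ with $d(y_{m_k},y_{n_k})\ge\varepsilon$ and $m_k$ minimal with this property, so that $d(y_{m_k-1},y_{n_k})<\varepsilon$. Using $d(y_j,y_{j+1})\to 0$ and the triangle inequality, the distances $d(y_{m_k},y_{n_k})$, $d(y_{m_k-1},y_{n_k-1})$ and $d(y_{m_k},y_{n_k-1})$ all converge to $\varepsilon$, whereas $d(y_{m_k},y_{m_k-1})$ and $d(y_{n_k},y_{n_k-1})$ vanish. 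Feeding $x=x_{m_k}$, $y=x_{n_k}$ into the contractive condition and letting $k\to\infty$, the left-hand side tends to $\psi(\varepsilon,\varepsilon)$ and the right-hand maximum tends to $\psi(\varepsilon,0)$; since $\psi(\varepsilon,0)>0$ by Condition-A(iii), I obtain $\psi(\varepsilon,\varepsilon)\le\phi(\psi(\varepsilon,0))<\psi(\varepsilon,0)\le\psi(\varepsilon,\varepsilon)$, a contradiction. The delicate point is tracking which of the six arguments collapse to $0$ and which survive as $\varepsilon$: the contradiction works precisely because the surviving maximum is $\psi(\varepsilon,0)$ and not $\psi(\varepsilon,\varepsilon)$.

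With $\{y_n\}$ Cauchy and $X$ complete, let $y_n\to z$, so that $Ax_n\to z$ and $Tx_n\to z$. To locate the fixed point I would exploit continuity of $T$: then $TTx_n\to Tz$ and $TAx_n\to Tz$, so the hypotheses of weak compatibility hold for $\{x_n\}$ with $t=z$, giving $ATx_n\to Tz$. Applying the contractive condition with $x=Tx_n$, $y=x_{n+1}$ and letting $n\to\infty$ yields $\psi(d(Tz,z),d(Tz,z))\le\phi(\psi(d(Tz,z),0))$, whence $Tz=z$. Consequently $z=Tz\in T(X)\subset A(X)$, so $z=Ap$ for some $p$; applying the condition with $x=p$, $y=x_n$ gives $\psi(d(Tp,z),d(Tp,z))\le\phi(\psi(d(Tp,z),d(Tp,z)))$ and hence $Tp=Ap=z$, i.e. $p$ is a coincidence point.

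Finally, feeding the constant sequence $x_n\equiv p$ (for which $Ax_n=Tx_n=z$ and $TAx_n=TTx_n=Tz=z$) into weak compatibility produces $ATx_n=Az\to Tz=z$, so $Az=z$; combined with $Tz=z$ this makes $z$ a common fixed point. Uniqueness follows by applying the contractive condition to two common fixed points $z,w$: the right-hand maximum collapses to $\psi(d(z,w),0)$, and $\psi(d(z,w),d(z,w))\le\phi(\psi(d(z,w),0))<\psi(d(z,w),d(z,w))$ forces $d(z,w)=0$. The only genuinely subtle steps beyond the Cauchy argument are verifying the weak-compatibility hypotheses via continuity of $T$ and choosing the right test pairs $(x,y)$ so that the right-hand side does not degenerate to $\psi(0,0)$.
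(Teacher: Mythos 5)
You have proved the statement correctly, but note first that the paper itself offers no proof of Theorem \ref{thm:1.9}: it is quoted from \cite{Sastry2003} purely as a benchmark, and the only argument in the paper with which your attempt can be compared is the proof of Theorem \ref{thm:main}, of which this result is (per Remark \ref{rem:2.4}) a weaker relative. Your Cauchy machinery is essentially the same as the paper's: the iteration $y_n=Tx_n=Ax_{n+1}$, the monotone decrease of $b_n=d(y_n,y_{n+1})$ obtained by checking that all six terms are dominated by $\psi(b_n,b_n)$ when $b_n\ge a_n$, the limit $\psi(L,L)\le\phi(\psi(L,L))$, and the negation-of-Cauchy argument in which the surviving terms tend to $\psi(\varepsilon,0)$ while the left side tends to $\psi(\varepsilon,\varepsilon)$ --- I checked the six limits and they come out exactly as you claim, so the contradiction $\psi(\varepsilon,\varepsilon)\le\phi(\psi(\varepsilon,0))<\psi(\varepsilon,0)\le\psi(\varepsilon,\varepsilon)$ is sound. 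Where you genuinely diverge is the endgame, and necessarily so: Theorem \ref{thm:main} works with weakly compatible pairs (commutation at coincidence points) and no continuity, so the paper first manufactures coincidence points $u,v$ from the range inclusions and only then invokes commutation; here the hypothesis is the sequential weak compatibility of Definition 1.7 plus continuity of $T$, and your use of the Picard sequence itself to verify the hypotheses of that definition (via $TAx_n=Ty_{n-1}\to Tz$ and $TTx_n=Ty_n\to Tz$), yielding $ATx_n\to Tz$ and hence $Tz=z$, followed by the constant-sequence trick at the coincidence point $p$ to get $Az=z$, is the right way to exploit the stronger hypotheses. The test pairs you chose ($x=Tx_n,y=x_{n+1}$; $x=p,y=x_n$; $x=z,y=w$) all make the right-hand maximum degenerate in your favor --- in particular you correctly noticed that in the coincidence step the term $\psi(d(Tp,Ay),d(Tp,Ap))$ survives as $\psi(d(Tp,z),d(Tp,z))$ rather than collapsing to a second argument of $0$, and that the conclusion still follows since $\psi(t)\le\phi(\psi(t))$ forces $\psi(t)=0$. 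I see no gap; the two approaches differ only in that yours trades the paper's coincidence-point commutation for continuity of $T$, which is exactly the trade-off Remark \ref{rem:2.4} is about.
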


\begin{definition}[\cite{Jungck1994}]
Two selfmaps $A$ and $T$ of a metric space $(X,d)$ are called weakly compatible if they commute at their coincidence point. That is, for $x\in X$, if $Ax=Tx$ then $ATx=TAx$.
\end{definition}

We observe that every compatible pair of selfmaps is weakly compatible but every weakly compatible pair of selfmaps need not be compatible \cite{Singh1997}.

We also observe that every weak compatible pair of selfmaps is weakly compatible ( Lemma 3.5D of \cite{Murthy1998}). But its converse need not be true as illustrated by Example \ref{ex:1.8}. In Example \ref{ex:1.8}, we observe that the ordered pair $(A,T)$ is not weak compatible while the pair of selfmaps $(A,T)$ is weakly compatible.

\begin{lemma}[\cite{KrishnaMurthy2004}] \label{lem:1.11}
Let $\psi:\mathbb{R}_{+}\to\mathbb{R}_{+}$ be upper semi-continuous and $\psi(t)<t$ for all $t\in\mathbb{R}_{+}-\{0\}$. Then there exists a monotone increasing continuous function $\phi:\mathbb{R}_{+}\to\mathbb{R}_{+}$ such that $\psi(t)\leq\phi(t)$ for all $t\in\mathbb{R}_{+}$ and $\phi(t)<t$ for all $t\in\mathbb{R}_{+}-\{0\}$.
\end{lemma}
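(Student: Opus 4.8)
The plan is to reduce the problem to the monotone case and then smooth. \emph{Step 1 (monotone majorant).} I define $\beta(t)=\sup_{0\le s\le t}\psi(s)$. Since $\psi$ is upper semicontinuous and $[0,t]$ is compact, the supremum is attained at some $s^*\in[0,t]$; hence $\beta(t)=\psi(s^*)\le s^*\le t$, and the inequality is strict for $t>0$ because either $s^*>0$, in which case $\psi(s^*)<s^*\le t$, or $s^*=0$, in which case $\beta(t)=\psi(0)=0<t$. (Note that a continuous $\phi$ with $\phi(t)<t$ forces $\phi(0)=0$, so the conclusion already presupposes $\psi(0)=0$; I use this normalization.) Thus $\beta$ is nondecreasing, dominates $\psi$, satisfies $\beta(0)=0$ and $0\le\beta(t)<t$ for $t>0$, and from the same compactness argument one checks that $\beta$ is right-continuous, hence itself upper semicontinuous. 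It therefore suffices to dominate $\beta$ by a continuous nondecreasing $\phi$ with $\phi(t)<t$ on $(0,\infty)$.

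\emph{Step 2 (local slack).} On any compact interval $[a,b]\subset(0,\infty)$ the gap $t\mapsto t-\beta(t)$ is lower semicontinuous and strictly positive, so it attains a positive minimum $m=m_{[a,b]}>0$; that is, $\beta(t)\le t-m$ on $[a,b]$. Because $\beta$ is nondecreasing and bounded on $[a,b]$, it has only finitely many jumps of size exceeding $m/4$. I would place interpolation nodes straddling each such large jump within a width $\delta$ so small that $\beta(d^-)-\beta(d-\delta)<m/2$, and fill the remaining subintervals with a mesh fine enough that the oscillation of $\beta$ there is below $m/2$. Setting $\phi$ equal to $\beta+\tfrac{m}{2}$ at the nodes and interpolating linearly yields a continuous nondecreasing function which dominates $\beta$ (the $m/2$ slack absorbs the controlled oscillation, while near a large jump $\phi$ ramps up across the narrow straddling interval) and stays below the identity (at each node $\beta+\tfrac{m}{2}\le t-m+\tfrac{m}{2}<t$, and a line lying below the diagonal at both endpoints of a subinterval lies below it throughout).

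\emph{Step 3 (globalization).} I exhaust $(0,\infty)=\bigcup_n[a_n,b_n]$ with $a_n\downarrow 0$ and $b_n\uparrow\infty$, carry out Step 2 on each piece, and glue the pieces so that the nodal values agree at the shared endpoints and remain nondecreasing across them; continuity at $t=0$ is then automatic, since $\beta(t)\le\phi(t)\le t$ forces $\phi(t)\to 0=\phi(0)$. \textbf{The main obstacle is this globalization in the tight-gap regime}: where $\beta(t)/t$ is close to $1$ the available slack $m$ is small, so the nodes must be chosen adaptively and the local constructions matched without destroying monotonicity or pushing $\phi$ up to the diagonal, and the jumps of $\beta$ may accumulate (though only countably, with summable total size on each compact piece). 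Verifying that a single continuous increasing $\phi$ can be threaded through the region $\{(t,y):\beta(t)\le y<t\}$ simultaneously on all of $(0,\infty)$ — rather than merely on each compact piece — is where the real work lies; everything else is the routine bookkeeping of piecewise-linear interpolation.
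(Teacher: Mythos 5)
The paper offers no proof of Lemma \ref{lem:1.11} at all --- it is imported verbatim from the thesis \cite{KrishnaMurthy2004} --- so your attempt can only be judged on its own terms. Your Steps 1 and 2 are correct: the running supremum $\beta(t)=\sup_{0\le s\le t}\psi(s)$ is attained by upper semicontinuity on $[0,t]$, is nondecreasing and right-continuous (hence upper semicontinuous), dominates $\psi$, and satisfies $\beta(t)<t$ for $t>0$ once one adopts the normalization $\psi(0)=0$, which you rightly observe is forced by the conclusion. The local piecewise-linear majorant on a compact $[a,b]\subset(0,\infty)$ with uniform slack $m/2$ also works: only finitely many jumps exceed $m/4$, the residual oscillation is controlled by a finite mesh (a compactness/Lebesgue-number argument you gesture at but do not spell out), and a chord joining two points strictly below the diagonal stays strictly below it.

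The genuine gap is Step 3, which you yourself flag as ``where the real work lies'' and do not carry out: as written, adjacent local constructions assign conflicting nodal values $\beta(a_n)+m/2$ at shared endpoints because the slack $m$ changes from piece to piece, and tapering the slack can destroy monotonicity when $\beta$ is locally constant. The difficulty is smaller than you suggest, though, and two standard observations close it. First, the admissible band $\{(t,y):\beta(t)\le y<t\}$ has convex vertical sections, so if you build local majorants $\phi_k$ on overlapping intervals $[2^{k-1},2^{k+1}]$, $k\in\mathbb{Z}$, and blend them with a continuous partition of unity subordinate to this locally finite cover, the convex combination $\phi_0$ is continuous on $(0,\infty)$ and still satisfies $\beta(t)\le\phi_0(t)<t$ pointwise; continuity at $0$ then follows from the squeeze $0\le\phi_0(t)<t$. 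Second, monotonicity need not be preserved during the gluing at all: the running maximum $\phi(t)=\max_{0\le s\le t}\phi_0(s)$ is continuous, nondecreasing, still dominates $\psi$, and still satisfies $\phi(t)<t$, since the maximum is attained at some $s^*\le t$ with $\phi_0(s^*)<s^*\le t$ when $s^*>0$ and equals $0$ when $s^*=0$. With these two remarks your outline becomes a complete proof; without them, the existence of a single continuous increasing $\phi$ threaded through the band on all of $(0,\infty)$ remains unverified.
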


The goal of this work is to prove a common fixed point theorem for two pairs of weakly compatible maps without requiring continuity, and under minimal completeness assumptions, using two-variable auxiliary functions. This general framework unifies and extends various known fixed point results including those of \cite{AlThagafi2006},  \cite{Jungck1994}, \cite{Murthy1998}, \cite{Pant1994}, \cite{Zhang2008}, and others.

\section{Main Results}

\begin{theorem}\label{thm:main}
Let $(X,d)$ be a metric space and $K$ be a nonempty subset of $X$. Let $A,B,S,T$ be selfmaps on $K$. Suppose there exist $\psi\in\Psi$, $\phi\in\Phi$ and such that for all $x,y\in K$:

\begin{align} \label{eq:2.1}
    \psi(d(Sx,Ty),d(Sx,Ty)) \leq \phi(&\max\{\psi(d(Ax,By),d(Ax,Sx)),\psi(d(Ax,By),d(By,Ty)),\nonumber \\
 &\psi(d(Ax,Sx),d(By,Ty)), \psi(d(By,Ty),d(Ax,Sx)),\\
 &\min\{\psi(d(By,Sx),d(Ax,Sx)),
\psi(d(Ax,Ty),d(By,Ty))\},  \nonumber\\
&\min\{\psi(d(By,Sx),d(By,Ty)), 
\psi(d(Ax,Ty),d(Ax,Sx))\}\}). \nonumber
\end{align}
Assume also that the pairs of selfmaps $(A,S)$ and $(B,T)$ are weakly compatible; $\overline{T(K)}\subset A(K)$ and $\overline{S(K)}\subset B(K)$. If either $\overline{T(K)}$ or $\overline{S(K)}$ or $A(K)$ or $B(K)$ is complete, then $A$, $B$, $S$ and $T$ have a unique common fixed point.
\end{theorem}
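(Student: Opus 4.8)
The plan is to run a Jungck-type iteration and recover a common fixed point as its limit. Since $\overline{S(K)}\subset B(K)$ and $\overline{T(K)}\subset A(K)$, starting from an arbitrary $x_0\in K$ I can choose $x_1,x_2,\dots$ so that $Sx_{2n}=Bx_{2n+1}$ and $Tx_{2n+1}=Ax_{2n+2}$; writing $y_{2n}=Sx_{2n}=Bx_{2n+1}$ and $y_{2n+1}=Tx_{2n+1}=Ax_{2n+2}$ produces a single sequence $\{y_n\}$ interlacing the images. Throughout I will use the diagonal map $g(t):=\psi(t,t)$, which by Condition-A (continuity, monotonicity, and Remark~\ref{rem:1.3}) is an altering distance function, so that $g(t)=0\iff t=0$.

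First I would show $d_n:=d(y_n,y_{n+1})$ is non-increasing. Substituting $x=x_{2n},\,y=x_{2n+1}$ in \eqref{eq:2.1} makes the left side equal to $g(d_{2n})$, while each of the six entries of the maximum is, by monotonicity of $\psi$, dominated by $g(\max\{d_{2n-1},d_{2n}\})$; the two $\min$-entries are controlled through their first factor $\psi(0,\cdot)$. If $d_{2n}>d_{2n-1}$ this yields $g(d_{2n})\le\phi(g(d_{2n}))<g(d_{2n})$, a contradiction, so $d_{2n}\le d_{2n-1}$; the substitution $x=x_{2n+2},\,y=x_{2n+1}$ gives $d_{2n+1}\le d_{2n}$ symmetrically. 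Hence $\{d_n\}$ decreases to some $L\ge 0$. Re-reading the same estimate with $d_{2n}\le d_{2n-1}$ shows the maximum is $\le g(d_{2n-1})$, so $g(d_n)\le\phi(g(d_{n-1}))$; letting $n\to\infty$ and using continuity of $g,\phi$ together with $\phi(t)<t$ forces $g(L)=0$, i.e. $L=0$.

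The main obstacle is showing $\{y_n\}$ is Cauchy, since $d_n\to 0$ alone does not suffice. I would argue by contradiction using the standard selection: if $\{y_n\}$ is not Cauchy there are $\varepsilon>0$ and indices $n_k<m_k$ (of matching parity, $m_k$ minimal) with $d(y_{2n_k},y_{2m_k})\ge\varepsilon$ and $d(y_{2n_k},y_{2m_k-2})<\varepsilon$; combined with $d_n\to 0$ this makes $d(y_{2m_k},y_{2n_k})$ and the neighbouring distances $d(y_{2m_k-1},y_{2n_k}),\,d(y_{2m_k},y_{2n_k+1}),\,d(y_{2m_k-1},y_{2n_k+1})$ all converge to $\varepsilon$. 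Putting $x=x_{2m_k},\,y=x_{2n_k+1}$ in \eqref{eq:2.1}, the left side tends to $g(\varepsilon)$; among the six terms, those carrying a factor $d(y_{2m_k-1},y_{2m_k})$ or $d(y_{2n_k},y_{2n_k+1})$ tend to $0$ or to $\psi(\varepsilon,0)$, and the $\min$-terms also tend to $\psi(\varepsilon,0)$, so the maximum tends to $\psi(\varepsilon,0)$. By continuity this gives $g(\varepsilon)\le\phi(\psi(\varepsilon,0))$; since $\psi(\varepsilon,0)>0$ by Condition-A(iii) and $\psi(\varepsilon,0)\le\psi(\varepsilon,\varepsilon)=g(\varepsilon)$, we reach $g(\varepsilon)\le\phi(\psi(\varepsilon,0))<\psi(\varepsilon,0)\le g(\varepsilon)$, a contradiction. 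Hence $\{y_n\}$ is Cauchy.

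Finally, whichever of $\overline{T(K)},\overline{S(K)},A(K),B(K)$ is complete contains one of the two interlacing subsequences (the odd-indexed $y_{2n+1}$ lie in $\overline{T(K)}\subset A(K)$, the even-indexed $y_{2n}$ in $\overline{S(K)}\subset B(K)$), so that subsequence, and hence all of $\{y_n\}$, converges to some $z$; as $\overline{S(K)},\overline{T(K)}$ are closed, $z\in A(K)\cap B(K)$, say $z=Au=Bv$. Substituting $x=u,\,y=x_{2n+1}$ (resp. $x=x_{2n},\,y=v$) in \eqref{eq:2.1} and passing to the limit, the right side collapses to $\psi$-values dominated by $g(d(Su,z))$ (resp. $g(d(Tv,z))$), giving $g(d(Su,z))\le\phi(g(d(Su,z)))$, whence $Su=z$ and similarly $Tv=z$. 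Thus $u,v$ are coincidence points, and weak compatibility of $(A,S)$ and $(B,T)$ yields $Az=Sz$ and $Bz=Tz$. Two further substitutions, $x=z,\,y=v$ and $x=u,\,y=z$, reduce the maximum to $\psi(d(Sz,z),0)$ and $\psi(d(Tz,z),0)$ and force $Sz=z$ and $Tz=z$; hence $Az=Bz=Sz=Tz=z$. Uniqueness follows by taking $x=z,\,y=w$ for a second common fixed point $w$: the maximum reduces to $\psi(d(z,w),0)$ and $g(d(z,w))\le\phi(g(d(z,w)))$ forces $z=w$.
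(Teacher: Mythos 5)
Your proposal is correct and follows essentially the same route as the paper's proof: the Jungck-type interlaced iteration, the monotone-then-vanishing step lengths via the diagonal map $\psi(t,t)$, the standard negation-of-Cauchy selection leading to $\psi(\varepsilon,\varepsilon)\le\phi(\psi(\varepsilon,0))\le\phi(\psi(\varepsilon,\varepsilon))$, and then coincidence points plus weak compatibility to produce and uniquify the common fixed point. The only differences are cosmetic (your labeling of the iteration swaps the roles of the even/odd steps, and you absorb the paper's separate case $y_n=y_{n+1}$ into the general monotonicity argument); in fact your substitution $x=z,\ y=v$ for showing $Sz=z$ is cleaner than the paper's, which invokes $Tu$ at that point.
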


\begin{proof}
Let $x_{0}\in K$. Since $T(K)\subseteq\overline{T(K)}\subseteq A(K)$, we can find $x_{1}\in K$ such that $Tx_{0}=Ax_{1}$ and since $S(K)\subseteq\overline{S(K)}\subseteq B(K)$, we can find $x_{2}\in K$ such that $Sx_{1}=Bx_{2}$. Continuing this way,  We construct sequences  $\{x_{n}\}$ and $\{y_{n}\}$ in $K$ inductively as follows:
\[
y_{2n} = Tx_{2n} = Ax_{2n+1} \quad \text{and} \quad y_{2n+1} = Sx_{2n+1} = Bx_{2n+2}
\]
for $n=0,1,2,\cdots$.
Now we claim that the sequence $\{y_{n}\}_{n=1}^{\infty}$ is Cauchy in $K$.

\textbf{Case (i):} $y_{n}=y_{n+1}$ for some $n$.

Without any loss of generality, we assume that $n$ is even. Then $n=2m$ for some $m\in \mathbb{Z}_{+}$. So $y_{2m}=y_{2m+1}$.

Thus, we have
\begin{align*}
\psi(d(y_{n+1},y_{n+2}),d(y_{n+1},y_{n+2})) &= \psi(d(y_{2m+1},y_{2m+2}),d(y_{2m+1},y_{2m+2})) \\
&= \psi(d(Sx_{2m+1},Tx_{2m+2}),d(Sx_{2m+1},Tx_{2m+2})) \\
&\leq \phi(\max\{\psi(d(Ax_{2m+1},Bx_{2m+2}),d(Ax_{2m+1},Sx_{2m+1})), \\
&\quad \psi(d(Ax_{2m+1},Bx_{2m+2}),\psi(d(Bx_{2m+2}, Tx_{2m+2})),\\
& \psi(d(Ax_{2m+1}, Sx_{2m+1}), d(Bx_{2m+2}, Tx_{2m+2})), \\
&\psi(d(Bx_{2m+2}, Tx_{2m+2}), d(Ax_{2m+1}, Sx_{2m+1})), \\
&\min\{\psi(d(Bx_{2m+2}, Sx_{2m+1}), d(Ax_{2m+1}, Sx_{2m+1})), \\
&\quad \psi(d(Ax_{2m+1}, Tx_{2m+2}), d(Bx_{2m+2}, Tx_{2m+2}))\}, \\
&\min\{\psi(d(Bx_{2m+2}, Sx_{2m+1}), d(Bx_{2m+2}, Tx_{2m+2})), \\
&\quad \psi(d(Ax_{2m+1}, Tx_{2m+2}), d(Ax_{2m+1}, Sx_{2m+1}))\} \\
&= \phi(\max\{\psi(d(y_n, y_{n+1}), d(y_n, y_{n+1})), \\
&\quad \psi(d(y_n, y_{n+1}), d(y_{n+1}, y_{n+2})), \quad \psi(d(y_n, y_{n+1}), d(y_{n+1}, y_{n+2})),\\
&\quad \psi(d(y_{n+1}, y_{n+2}), d(y_n, y_{n+1})), \\
&\quad \min\{\psi(d(y_{n+1}, y_{n+1}), d(y_n, y_{n+1})), \quad \psi(d(y_n, y_{n+2}), d(y_{n+1}, y_{n+2}))\},\\
&\quad \min\{\psi(d(y_{n+1}, y_{n+1}), d(y_{n+1}, y_{n+2})), \quad \psi(d(y_n, y_{n+2}), d(y_n, y_{n+1}))\}\\
&\leq \phi(\max\{0, \psi(0, d(y_{n+1}, y_{n+2})), \psi(d(y_{n+1}, y_{n+2}), 0)\}) \\
&\leq \phi(\psi(d(y_{n+1}, y_{n+2}), d(y_{n+1}, y_{n+2}))).
\end{align*}

This implies that $\psi(d(y_{n+1}, y_{n+2}), d(y_{n+1}, y_{n+2})) = 0$ and hence $d(y_{n+1}, y_{n+2}) = 0$ (By Remark \ref{rem:1.3}).

Hence, $y_{n+1} = y_{n+2}$. Repeating this procedure inductively, we show that $y_n = y_{n+k}$ for $k \geq 1$. Hence, $\{y_m\}_{m\geq n}$ is a constant sequence and therefore is Cauchy in $K$.

\textbf{Case (ii):} $y_n \neq y_{n+1}$, $n = 1, 2, \dotsc$.

Let $\alpha_n = d(y_n, y_{n+1})$ for all $n = 0, 1, 2, \dotsc$.

Now consider
\begin{align*}
\psi(\alpha_{2n+1}, \alpha_{2n+1}) &= \psi(d(y_{2n+1}, y_{2n+2}), d(y_{2n+1}, y_{2n+2})) \\
&= \psi(d(Sx_{2n+1}, Tx_{2n+2}), d(Sx_{2n+1}, Tx_{2n+2})) \\
&\leq \phi(\max\{\psi(d(Ax_{2n+1}, Bx_{2n+2}), d(Ax_{2n+1}, Sx_{2n+1})), \psi(d(Ax_{2n+1}, Bx_{2n+2}), d(Bx_{2n+2}, Tx_{2n+2})),\\
&\quad \quad \psi(d(Ax_{2n+1}, Sx_{2n+1}), d(Bx_{2n+2}, Tx_{2n+2})), \psi(d(Bx_{2n+2}, Tx_{2n+2}), d(Ax_{2n+1}, Sx_{2n+1})),\\
&\quad \quad \min\{\psi(d(Bx_{2n+2}, Sx_{2n+1}), d(Ax_{2n+1}, Sx_{2n+1})), \psi(d(Ax_{2n+1}, Tx_{2n+2}), d(Bx_{2n+2}, Tx_{2n+2}))\}, \\
&\quad \quad \min\{\psi(d(Bx_{2n+2}, Sx_{2n+1}), d(Bx_{2n+2}, Tx_{2n+2})), \psi(d(Ax_{2n+1}, Tx_{2n+2}), d(Ax_{2n+1}, Sx_{2n+1}))\}\\
&= \phi(\max\{\psi(d(y_{2n}, y_{2n+1}), d(y_{2n}, y_{2n+1})), \psi(d(y_{2n}, y_{2n+1}), d(y_{2n+1}, y_{2n+2})),\\
&\quad \quad \psi(d(y_{2n}, y_{2n+1}), d(y_{2n+1}, y_{2n+2})), \psi(d(y_{2n+1}, y_{2n+2}), d(y_{2n}, y_{2n+1})), \\
&\quad \quad \min\{\psi(d(y_{2n+1}, y_{2n+1}), d(y_{2n}, y_{2n+1})), \psi(d(y_{2n}, y_{2n+2}), d(y_{2n+1}, y_{2n+2}))\}, \\
&\quad \quad \min\{\psi(d(y_{2n+1}, y_{2n+1}), d(y_{2n+1}, y_{2n+2})), \psi(d(y_{2n}, y_{2n+2}), d(y_{2n}, y_{2n+1}))\} \\
&\leq \phi(\max\{\psi(\alpha_{2n}, \alpha_{2n}), \psi(\alpha_{2n}, \alpha_{2n+1}), \psi(\alpha_{2n+1}, \alpha_{2n}),\psi(0, \alpha_{2n}), \psi(0, \alpha_{2n+1})\}).
\end{align*}

Now if $\alpha_{2n} < \alpha_{2n+1}$, then $\psi(\alpha_{2n+1}, \alpha_{2n+1}) \leq \phi(\psi(\alpha_{2n+1}, \alpha_{2n+1}))$, which is a contradiction. Hence, $\alpha_{2n+1} \leq \alpha_{2n}$ and
\begin{equation}\label{eq:2.1.2}
\psi(\alpha_{2n+1}, \alpha_{2n+1}) \leq \phi(\psi(\alpha_{2n}, \alpha_{2n})), \quad n = 0, 1, 2, \dotsc.
\end{equation}

Similarly, we can show that
\begin{equation}\label{eq:2.1.3}
\psi(\alpha_{2n}, \alpha_{2n}) \leq \phi(\psi(\alpha_{2n-1}, \alpha_{2n-1})), \quad n = 1, 2, 3, \dotsc.
\end{equation}

Hence, from \eqref{eq:2.1.2} and \eqref{eq:2.1.3}, we get
\begin{equation}\label{eq:2.1.4}
\psi(\alpha_{n+1}, \alpha_{n+1}) \leq \phi(\psi(\alpha_{n}, \alpha_{n})), \quad n = 0, 1, 2, \dotsc.
\end{equation}

Since $\psi$ is monotone increasing in each of its variables, we have
\begin{equation}\label{eq:2.1.6}
\alpha_{2n+1} \leq \alpha_{2n}, \quad n = 0, 1, 2, \dotsc.
\end{equation}

Thus, the sequence $\{\alpha_n\}$ decreases, say, to $\alpha (\geq 0)$. Since $\phi$ and $\psi$ are continuous, letting $n \to \infty$, from \eqref{eq:2.1.4}, we get
\[
\psi(\alpha, \alpha) \leq \phi(\psi(\alpha, \alpha)),
\]
so that $\psi(\alpha, \alpha) = 0$ and hence $\alpha = 0$. Therefore,
\begin{equation}\label{eq:2.1.7}
\lim_{n\to\infty} d(y_n, y_{n+1}) = 0.
\end{equation}

We now claim that $\{y_n\}$ is Cauchy. By \eqref{eq:2.1.6} and \eqref{eq:2.1.7}, it is sufficient to show that $\{y_{2n}\}$ is Cauchy. Otherwise, there exists an $\epsilon > 0$ and there exist sequences $\{m_k\}$ and $\{n_k\}$ with $m_k > n_k > k$ such that
\begin{equation}\label{eq:2.1.8}
d(y_{2m_k}, y_{2n_k}) \geq \epsilon \quad \text{and} \quad d(y_{2m_k-2}, y_{2n_k}) < \epsilon.
\end{equation}

Hence,
\begin{equation}\label{eq:2.1.9}
\epsilon \leq \liminf_{k\to\infty} d(y_{2m_k}, y_{2n_k}).
\end{equation}

Now for each positive integer $k$, by triangle inequality we get,
\begin{equation*}
d(y_{2m_k}, y_{2n_k}) \leq d(y_{2m_k}, y_{2m_k-1}) + d(y_{2m_k-1}, y_{2m_k-2}) + d(y_{2m_k-2}, y_{2n_k}).
\end{equation*}

On taking limit supremum of both sides, as $k \to \infty$, we get
\begin{equation}\label{eq:2.1.10}
\limsup_{k\to\infty} d(y_{2m_k}, y_{2n_k}) \leq \epsilon.
\end{equation}

Hence from \eqref{eq:2.1.9} and \eqref{eq:2.1.10}, we have
\begin{equation}\label{eq:2.1.11}
\lim_{k\to\infty} d(y_{2m_k}, y_{2n_k}) = \epsilon.
\end{equation}

Now
\begin{align*}
d(y_{2m_k}, y_{2n_k-1}) &\leq d(y_{2m_k}, y_{2n_k}) + d(y_{2n_k}, y_{2n_k-1});
\end{align*}
On taking limit supremum, as $k \to \infty$, we get
\begin{equation}\label{eq:2.1.12}
\limsup_{k\to\infty} d(y_{2m_k}, y_{2n_k-1}) \leq \epsilon.
\end{equation}

Again we have
\begin{align*}
d(y_{2m_k}, y_{2n_k}) &\leq d(y_{2m_k}, y_{2n_k-1}) + d(y_{2n_k-1}, y_{2n_k});
\end{align*}
On taking limit infimum, as $k \to \infty$, we get
\begin{equation}\label{eq:2.1.13}
\epsilon \leq \liminf_{k\to\infty} d(y_{2m_k}, y_{2n_k-1}).
\end{equation}

From \eqref{eq:2.1.12} and \eqref{eq:2.1.13}, we have
\begin{equation}\label{eq:2.1.14}
\lim_{k\to\infty} d(y_{2m_k}, y_{2n_k-1}) = \epsilon.
\end{equation}

Similarly, we can show that
\begin{equation}\label{eq:2.1.15}
\lim_{k\to\infty} d(y_{2m_k+1}, y_{2n_k}) = \epsilon
\end{equation}
and
\begin{equation}\label{eq:2.1.16}
\lim_{k\to\infty} d(y_{2m_k+1}, y_{2n_k-1}) = \epsilon.
\end{equation}

Now consider
\begin{align*}
\psi(d(y_{2m_k+1}, y_{2n_k}), d(y_{2m_k+1}, y_{2n_k})) &= \psi(d(Sx_{2m_k+1}, Tx_{2n_k}), d(Sx_{2m_k+1}, Tx_{2n_k})) \\
&\leq \phi(\max\{\psi(d(Ax_{2m_k+1}, Bx_{2n_k}), d(Ax_{2m_k+1}, Sx_{2m_k+1})), \\
&\quad \psi(d(Ax_{2m_k+1}, Bx_{2n_k}), d(Bx_{2n_k}, Tx_{2n_k})), \\
&\quad \psi(d(Ax_{2m_k+1}, Sx_{2m_k+1}), d(Bx_{2n_k}, Tx_{2n_k})), \\
&\quad \psi(d(Bx_{2n_k}, Tx_{2n_k}), d(Ax_{2m_k+1}, Sx_{2m_k+1})), \\
&\quad \min\{\psi(d(Bx_{2n_k}, Sx_{2m_k+1}), d(Ax_{2m_k+1}, Sx_{2m_k+1})), \\
&\quad \quad \psi(d(Ax_{2m_k+1}, Tx_{2n_k}), d(Bx_{2n_k}, Tx_{2n_k}))\}, \\
&\quad \min\{\psi(d(Bx_{2n_k}, Sx_{2m_k+1}), d(Bx_{2n_k}, Tx_{2n_k})), \\
&\quad \quad \psi(d(Ax_{2m_k+1}, Tx_{2n_k}), d(Ax_{2m_k+1}, Sx_{2m_k+1}))\} \\
&= \phi(\max\{\psi(d(y_{2m_k}, y_{2n_k-1}), d(y_{2m_k}, y_{2m_k+1})), \psi(d(y_{2m_k}, y_{2n_k-1}), d(y_{2n_k-1}, y_{2n_k})),\\
&\quad \min\{\psi(d(y_{2n_k-1},y_{2m_k+1}),d(y_{2m_k},y_{2m_k+1})), \psi(d(y_{2m_k},y_{2n_k}),d(y_{2n_k-1},y_{2n_k}))\},\\
&\quad \min\{\psi(d(y_{2n_k-1},y_{2m_k+1}),d(y_{2n_k-1},y_{2n_k})),\psi(d(y_{2m_k},y_{2n_k}),d(y_{2m_k},y_{2m_k+1}))\}
\end{align*}

Letting $n\to\infty$, by the continuity of $\phi$ and $\psi$, and from \eqref{eq:2.1.7}, \eqref{eq:2.1.11}, \eqref{eq:2.1.14}, \eqref{eq:2.1.15} and \eqref{eq:2.1.16}, we get

\begin{align*}
\psi(\varepsilon,\varepsilon) &\leq \phi(\max\{\psi(\varepsilon,0),\psi(\varepsilon,0),\psi(0,0), \psi(0,0), \\
&\quad \min\{\psi(\varepsilon,0),\psi(\varepsilon,0)\}, \min\{\psi(\varepsilon,0),\psi(\varepsilon,0)\}\}) \\
&= \phi(\psi(\varepsilon,0)) \\
&\leq \phi(\psi(\varepsilon,\varepsilon)), \text{ a contradiction}.
\end{align*}

Hence, $\{y_{2n}\}$ is a Cauchy sequence in $K$ and hence $\{y_n\}$ is Cauchy in $K$.

Hence, the subsequences $\{Tx_{2n}\}$, $\{Sx_{2n+1}\}$, $\{Ax_{2n}\}$ and $\{Bx_{2n+2}\}$ are Cauchy in $K$.

Assume that $\overline{T(K)}$ is complete. Then there exists a $z\in\overline{T(K)}$ such that
\begin{equation}\label{eq:2.1.17}
\lim_{n\to\infty} Tx_{2n} = z
\end{equation}
and hence,
\begin{equation}\label{eq:2.1.18}
\lim_{n\to\infty} Ax_{2n+1} = \lim_{n\to\infty} Bx_{2n+2} = \lim_{n\to\infty} Sx_{2n+1} = z
\end{equation}
and $z\in\overline{S(K)}$.

Since $\overline{T(K)} \subset A(K)$ and $\overline{S(K)} \subset B(K)$, there exist $u,v\in K$ such that
\begin{equation}\label{eq:2.1.19}
z = Au \text{ and } z = Bv.
\end{equation}

Now taking $x = u$ and $y = x_{2n}$ in \eqref{eq:2.1}, we have
\begin{align*}
\psi(d(Su,Tx_{2n}),d(Su,Tx_{2n})) &\leq \phi(\max\{\psi(d(Au,Bx_{2n}),d(Au,Su)),\psi(d(Au,Bx_{2n}), d(Bx_{2n},Tx_{2n})),  \\
&\quad \psi(d(Au,Su),d(Bx_{2n},Tx_{2n})), \psi(d(Bx_{2n},Tx_{2n}),d(Au,Su)),\\
&\quad \min\{\psi(d(Bx_{2n},Su),d(Au,Su)), \psi(d(Au,Tx_{2n}),d(Bx_{2n},Tx_{2n}))\},\\
&\quad \min\{\psi(d(Bx_{2n},Su), d(Bx_{2n},Tx_{2n})), \psi(d(Au,Tx_{2n}),d(Au,Su))\})
\end{align*}

Letting $n\to\infty$, by the continuity and monotone increasing property of $\phi$ and $\psi$ and using \eqref{eq:2.1.17}, \eqref{eq:2.1.18} and \eqref{eq:2.1.19}, we get
\[
\psi(d(Su,z),d(Su,z)) \leq \phi(\psi(d(Su,z),d(Su,z))).
\]
This implies that $\psi(d(Su,z),d(Su,z)) = 0$ and hence by Remark \ref{rem:1.3} we have $d(z,Su) = 0$.

Hence, $z = Su$.

Similarly in \eqref{eq:2.1}, by taking $x = x_{2n+1}$ and $y = v$ and letting $n\to\infty$, we get $z = Tv$.

Hence, $Su = Au = z = Tv = Bv$.

Since the pairs of mappings $(A,S)$ and $(B,T)$ are weakly compatible mappings, we have $ASu = SAu$ and $BTv = TBv$ and hence
\[
Az = Sz \text{ and } Bz = Tz.
\]

Hence, $z$ is the point of coincidence of $A$ and $S$, and $B$ and $T$.

We now claim that $Az = Sz = z$. Consider
\begin{align*}
\psi(d(Sz,z),d(Sz,z)) &= \psi(d(Sz,Tu),d(Sz,Tu)) \\
&\leq \phi(\max\{\psi(d(Az,Bu),d(Az,Sz)), \psi(d(Az,Bu),d(Bu,Tu)), \\
&\quad \psi(d(Az,Sz),d(Bu,Tu)), \psi(d(Bu,Tu),d(Az,Sz)), \\
&\quad \min\{\psi(d(Bu,Sz),d(Az,Sz)), \psi(d(Az,Tu),d(Bu,Tu))\}, \\
&\quad \min\{\psi(d(Bu,Sz),d(Bu,Tu)), \psi(d(Az,Tu),d(Az,Sz))\}\}) \\
&= \phi(\psi(d(Sz,z),0)) \\
&\leq \phi(\psi(d(Sz,z),d(Sz,z))).
\end{align*}

This implies that $\psi(d(Sz,z),d(Sz,z)) = 0$ and hence $d(Sz,z) = 0$. Therefore,
\[
Sz = z.
\]
Hence, $Az = Sz = z$.

Similarly, by taking $x = u$ and $y = z$ in \eqref{eq:2.1} we get $Tz = z$. Hence,
\[
Bz = Tz = z.
\]

Thus, $z$ is a common fixed point of $A$, $B$, $S$ and $T$. The uniqueness of $z$ follows from the inequality \eqref{eq:2.1}.

Similarly, by symmetry we prove for the case if $\overline{S(K)}$ is complete.

Now assume $A(K)$ is complete. As $\overline{T(K)} \subset A(K)$ and $\overline{T(K)}$ is closed, $\overline{T(K)}$ is complete. Hence by the above argument, there exists a point $z \in \overline{T(K)}$ such that
\[
Az = Bz = Sz = Tz = z.
\]

Similarly, if we assume $B(K)$ is complete, the proof follows by symmetry.

Hence the theorem.
\end{proof}

The following corollaries are immediate from Theorem \ref{thm:main}.

\begin{corollary}\label{cor:2.2}
Let $(X,d)$ be a metric space and $K$ be a nonempty subset of $X$. Let $A,B,S,T$ be selfmaps on $K$. Assume that there exist $r\in[0,1)$ and $\psi\in\Psi$, and for all $x,y\in X$,
\begin{align*}
    \psi(d(Sx,Ty),d(Sx,Ty)) \leq r &\max\{\psi(d(Ax,By),d(Ax,Sx)), \psi(d(Ax,By),d(By,Ty)),\\
 &\psi(d(Ax,Sx),d(By,Ty)), \psi(d(By,Ty),d(Ax,Sx)),\\
 &\min\{\psi(d(By,Sx),d(Ax,Sx)),\psi(d(Ax,Ty),d(By,Ty))\}, \\
 &\min\{\psi(d(By,Sx),d(By,Ty)), \psi(d(Ax,Ty),d(Ax,Sx))\}\}.
\end{align*}
Assume also that the pairs of selfmaps $(A,S)$ and $(B,T)$ are weakly compatible; $T(K)\subset A(K)$ and $\overline{S(K)}\subset B(K)$. If either $T(K)$ or $\overline{S(K)}$ or $A(K)$ or $B(K)$ is complete, then $A$, $B$, $S$ and $T$ have a unique common fixed point.
\end{corollary}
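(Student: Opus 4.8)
The plan is to recognize the linear contraction constant $r$ in the corollary as being induced by a specific control function $\phi\in\Phi$, namely $\phi(t)=rt$, and thereby reduce the corollary to a direct application of Theorem \ref{thm:main}. Concretely, I would set $\phi\colon\mathbb{R}_+\to\mathbb{R}_+$ by $\phi(t)=rt$ and observe that the right-hand side $r\max\{\cdots\}$ of the corollary's inequality is literally $\phi(\max\{\cdots\})$, so that the contraction hypothesis of the corollary coincides verbatim with inequality \eqref{eq:2.1}. Once $\phi$ is shown to lie in $\Phi$ and the remaining structural hypotheses are matched, both existence and uniqueness of the common fixed point are inherited directly from Theorem \ref{thm:main}.

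First I would verify $\phi\in\Phi$. The map $t\mapsto rt$ is continuous and monotonically increasing, and for $t>0$ one has $0<rt<t$ precisely when $0<r<1$; thus $\phi\in\Phi$ whenever $r\in(0,1)$. The only case not covered is $r=0$, which I would dispose of at the outset: if the inequality holds with $r=0$, its right-hand side vanishes, forcing $\psi(d(Sx,Ty),d(Sx,Ty))=0$ for all $x,y\in K$; but then the same inequality automatically holds with any $r'\in(0,1)$, since the right-hand side is always nonnegative. Hence we may assume without loss of generality that $r\in(0,1)$ and take $\phi(t)=rt\in\Phi$.

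Next I would match the remaining hypotheses. The six-term maximum (including the two inner minima) is identical in the two statements, and the weak compatibility of $(A,S)$ and $(B,T)$ is assumed verbatim. For the range and completeness conditions, Theorem \ref{thm:main} is phrased with $\overline{T(K)}\subset A(K)$, whereas the corollary supplies only $T(K)\subset A(K)$ together with $\overline{S(K)}\subset B(K)$. I would reconcile this using the completeness hypothesis: when $T(K)$ is complete it is closed, so $\overline{T(K)}=T(K)\subset A(K)$ and Theorem \ref{thm:main} applies directly; and when $A(K)$ is complete the limit $z=\lim_{n\to\infty} Ax_{2n+1}$ lies in the closed set $A(K)$, so a coincidence point $u$ with $Au=z$ is produced without invoking the closure inclusion at all.

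The main obstacle is precisely this reconciliation of the range conditions in the remaining completeness cases, namely when the complete set is $\overline{S(K)}$ or $B(K)$. There the limit $z$ is only known to lie in $\overline{T(K)}$, and locating $u\in K$ with $Au=z$ genuinely needs $\overline{T(K)}\subset A(K)$ rather than the weaker $T(K)\subset A(K)$. To close this, I would either strengthen the range hypothesis to $\overline{T(K)}\subset A(K)$ (exactly as in Theorem \ref{thm:main}), or re-run the short coincidence-point step from the proof of Theorem \ref{thm:main}, using completeness of the relevant image set to guarantee $z\in A(K)$. The uniqueness assertion then carries over unchanged, since it follows from inequality \eqref{eq:2.1} alone, which now holds with $\phi(t)=rt$.
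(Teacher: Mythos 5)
Your proposal takes exactly the same route as the paper, whose entire proof of Corollary~\ref{cor:2.2} is the single line ``Follows from Theorem~\ref{thm:main} by choosing $\phi(t)=rt$.'' Your core reduction is correct, and you have in fact repaired two genuine gaps that the paper's one-liner silently skips: first, for $r=0$ the map $\phi(t)=rt$ is \emph{not} in $\Phi$ (it violates $0<\phi(t)$ for $t>0$), and your observation that the $r=0$ hypothesis implies the $r'\in(0,1)$ hypothesis disposes of this cleanly; second, the corollary's hypotheses ($T(K)\subset A(K)$ and completeness of $T(K)$ rather than of $\overline{T(K)}$) do not literally match those of Theorem~\ref{thm:main}, and your case analysis (complete implies closed, so $\overline{T(K)}=T(K)$ when $T(K)$ is complete; $z\in A(K)$ directly when $A(K)$ is complete) handles two of the four completeness cases correctly. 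The residual difficulty you flag --- that when only $\overline{S(K)}$ or $B(K)$ is complete, the limit $z$ lies in $\overline{T(K)}$ but the hypothesis $T(K)\subset A(K)$ does not place it in $A(K)$ --- is a real defect in the corollary as stated rather than in your argument; the statement evidently intends $\overline{T(K)}\subset A(K)$ as in Theorem~\ref{thm:main}, and with that reading your proof closes completely.
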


\begin{proof}
Follows from Theorem \ref{thm:main} by choosing $\phi(t)=rt$, for some $r\in[0,1)$ and for all $t\in \mathbb{R}_{+}$.
\end{proof}

\begin{corollary}\label{cor:2.3}
Let $(X,d)$ be a metric space and $K$ be a nonempty subset of $X$. Let $A$ and $T$ be selfmaps on $K$. Assume that there exist $\psi\in\Psi$ and $\phi\in\Phi$, and for all $x,y\in X$,

\begin{align*}
    \psi(d(Tx,Ty),d(Tx,Ty)) \leq &\phi(\max\{\psi(d(Ax,Ay),d(Ax,Tx)), \psi(d(Ax,Ay),d(Ty,Ty)),\\
 &\psi(d(Ax,Tx),d(Ay,Ty)), \psi(d(Ay,Ty),d(Ax,Tx)),\\
 &\min\{\psi(d(Ay,Tx),d(Ax,Tx)), \psi(d(Ax,Ty),d(Ay,Ty))\},\\
 &\min\{\psi(d(Ay,Tx),d(Ay,Ty)), \psi(d(Ax,Ty),d(Ax,Tx))\}\}).
\end{align*}
Assume also that the pair of selfmaps $(A,T)$ is weakly compatible; $\overline{T(K)}\subset A(K)$. If either $T(K)$ or $A(K)$ is complete, then $A$ and $T$ have a unique common fixed point.
\end{corollary}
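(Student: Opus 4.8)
The plan is to obtain Corollary~\ref{cor:2.3} as the two-map specialization of Theorem~\ref{thm:main}, exactly in the spirit of the preceding corollary. Concretely, I would apply Theorem~\ref{thm:main} with the choices $B=A$ and $S=T$, retaining the same $\psi\in\Psi$ and $\phi\in\Phi$, and then check that every hypothesis of the theorem is furnished by the data of the corollary.

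First I would confirm that the contraction condition~\eqref{eq:2.1} collapses onto the inequality of Corollary~\ref{cor:2.3}. Under $B=A$ and $S=T$ the relevant arguments transform by $d(Sx,Ty)\mapsto d(Tx,Ty)$, $d(Ax,By)\mapsto d(Ax,Ay)$, $d(Ax,Sx)\mapsto d(Ax,Tx)$, $d(By,Ty)\mapsto d(Ay,Ty)$ and $d(By,Sx)\mapsto d(Ay,Tx)$, while $d(Ax,Ty)$ is unchanged. Substituting these into the six entries of the maximum in~\eqref{eq:2.1} reproduces, entry by entry, the six entries appearing in the corollary (the second entry becoming $\psi(d(Ax,Ay),d(Ay,Ty))$). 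Thus the contraction hypothesis of Theorem~\ref{thm:main} holds verbatim for the pair $(A,T)$.

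Next I would dispatch the structural hypotheses. Weak compatibility of $(A,S)$ and of $(B,T)$ both reduce to weak compatibility of $(A,T)$, which is assumed. The inclusion $\overline{T(K)}\subset A(K)$ is assumed outright, and the second inclusion $\overline{S(K)}\subset B(K)$ becomes the identical statement $\overline{T(K)}\subset A(K)$ under the substitution, so it is not an extra constraint. For completeness, the theorem requires one of $\overline{T(K)}$, $\overline{S(K)}$, $A(K)$, $B(K)$ to be complete; after substitution this list reduces to $\overline{T(K)}$ or $A(K)$. The corollary's alternative that $A(K)$ is complete is already on this list, while the alternative that $T(K)$ is complete implies, since a complete subspace of a metric space is closed, that $\overline{T(K)}=T(K)$ is complete---so in either case the theorem's completeness requirement is met.

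Having verified all hypotheses, Theorem~\ref{thm:main} delivers a unique common fixed point $z$ of $A,B,S,T$, which under $B=A$, $S=T$ is precisely a unique common fixed point of $A$ and $T$; this is the assertion of the corollary. Since the argument is a pure specialization, there is no substantial obstacle: the only steps demanding attention are the bookkeeping that the two inclusions coincide and the observation that ``$T(K)$ complete'' is just the closed case of ``$\overline{T(K)}$ complete.''
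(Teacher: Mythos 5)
Your proposal is correct and is exactly the paper's route: the paper proves Corollary~\ref{cor:2.3} in one line by setting $S=T$ and $B=A$ in Theorem~\ref{thm:main}, and your additional bookkeeping (the two inclusions coinciding, and ``$T(K)$ complete'' implying ``$\overline{T(K)}$ complete'' since a complete subspace is closed) correctly fills in the details the paper leaves implicit. Your observation that the second entry should read $\psi(d(Ax,Ay),d(Ay,Ty))$ rather than the printed $\psi(d(Ax,Ay),d(Ty,Ty))$ is also right, and harmless either way by monotonicity of $\psi$ and $\phi$.
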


\begin{proof}
Follows from Theorem \ref{thm:main} by choosing $S=T$ and $B=A$.
\end{proof}

\begin{remark}\label{rem:2.4}
Corollary \ref{cor:2.3} improves Theorem \ref{thm:1.9}, for, in Theorem \ref{thm:1.9} $X$ is complete metric space, the map $T$ is continuous and the ordered pair $(T,A)$ is weak compatible. Here we have replaced the completeness of $X$ by completeness of either one of $A(X)$, $B(X)$, $\overline{S(X)}$ or $\overline{T(X)}$. Also in Corollary \ref{cor:2.3}, no continuity assumption of the mappings is used. Further, the weak compatibility of the ordered pair of mappings $(T,A)$ is relaxed by weakly compatibility of $A$ and $T$.
\end{remark}

\begin{theorem}\label{thm:2.5}
Let $(X, d)$ be a metric space and $K$ be a nonempty subset of $X$. Let $A, B, S, T$ be selfmaps on $K$. Assume that there exists $\phi \in \Phi$ such that
\begin{align*}
    [(d(Sx, Ty))^p +& r (d(Sx, Ty))^q]^\lambda \leq \phi(\max\{[(d(Ax, By))^p + r (d(Ax, Sx))^q]^\lambda, \\
&[(d(Ax, By))^p + r (d(By, Ty))^q]^\lambda, [(d(Ax, Sx))^p + r (d(By, Ty))^q]^\lambda \\
&[(d(By, Ty))^p + r (d(Ax, Sx))^q]^\lambda, \\
&\min\{[(d(By, Sx))^p + r (d(Ax, Sx))^q]^\lambda, [(d(Ax, Ty))^p + r(d(By, Ty))^q]^\lambda\} \\
&\min\{[(d(By, Sx))^p + r (d(By, Ty))^q]^\lambda, [(d(Ax, Ty))^p + r (d(Ax, Sx))^q]^\lambda\}\})
\end{align*}
for all $x, y \in K$, for some $p, \lambda > 0$ and $r, q \geq 0$.

Assume also that the pairs of selfmaps $(A, S)$ and $(B, T)$ are weakly compatible; $T(K) \subset A(K)$ and $S(K) \subset B(K)$. If either $T(K)$ or $S(K)$ or $A(K)$ or $B(K)$ is complete, then $A$, $B$, $S$ and $T$ have a unique common fixed point.
\end{theorem}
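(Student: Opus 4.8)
The plan is to deduce Theorem~\ref{thm:2.5} directly from Theorem~\ref{thm:main}. The key observation is that the contractive inequality of Theorem~\ref{thm:2.5} is nothing but inequality~\eqref{eq:2.1} evaluated at the specific two-variable auxiliary function
\[
\psi(s,t)=\bigl(s^{p}+r\,t^{q}\bigr)^{\lambda},\qquad p,\lambda>0,\ q,r\ge 0,
\]
namely the function appearing as Example~(iv). Hence the whole proof collapses to two verifications: that this $\psi$ lies in $\Psi$, and that the hypotheses of Theorem~\ref{thm:2.5} imply those of Theorem~\ref{thm:main}; once both are established, Theorem~\ref{thm:main} applies verbatim and returns the unique common fixed point.

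For membership in $\Psi$ I would check Condition-A termwise. Continuity is immediate, since $(s,t)\mapsto s^{p}+r\,t^{q}$ is continuous for $p>0$, $q,r\ge 0$ and $u\mapsto u^{\lambda}$ is continuous for $\lambda>0$. Monotonicity in each argument follows because $s\mapsto s^{p}$ is increasing, $t\mapsto r\,t^{q}$ is nondecreasing, and the outer power $u\mapsto u^{\lambda}$ is increasing, so the composition is increasing in $s$ and in $t$. For the final clause, $\lambda>0$ gives that $\psi(s,t)=0$ forces $s^{p}+r\,t^{q}=0$, hence $s=0$; in particular $\psi(0,0)=0$ and $\psi(t,0)=0$ implies $t=0$, which is Condition-A(iii) and is consistent with Remark~\ref{rem:1.3}. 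The only point to watch is the degenerate case $q=0$, where one reads the $t$-term as vanishing at the origin so that $\psi(0,0)=0$ is preserved; for $q>0$ this is automatic.

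The remaining task is to match the structural hypotheses. Weak compatibility of $(A,S)$ and $(B,T)$ is assumed identically in both statements, so nothing is needed there. The genuine bookkeeping is to pass from the plain inclusions $T(K)\subset A(K)$ and $S(K)\subset B(K)$ of Theorem~\ref{thm:2.5} to the closed inclusions $\overline{T(K)}\subset A(K)$ and $\overline{S(K)}\subset B(K)$ required by Theorem~\ref{thm:main}, together with completeness of one of $\overline{T(K)},\overline{S(K)},A(K),B(K)$. For this I would invoke that a complete subset of a metric space is closed, so whichever of the four images is complete is also closed, and then combine this closedness with the given inclusions.

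I expect this last matching to be the main obstacle, and it deserves a careful case analysis rather than a one-line appeal. Tracing the proof of Theorem~\ref{thm:main}, the limit $z$ of the Cauchy sequence $\{y_{n}\}$ lies a priori only in $\overline{T(K)}\cap\overline{S(K)}$, and likewise in $\overline{A(K)}\cap\overline{B(K)}$ since the even iterates sit in $A(K)$ and the odd ones in $B(K)$; yet the argument requires preimages $u,v\in K$ with $Au=z$ and $Bv=z$, that is, one genuinely needs $z\in A(K)\cap B(K)$. In each of the four completeness cases I would check that closedness of the distinguished image, via $T(K)\subset A(K)$ and $S(K)\subset B(K)$, indeed places $z$ in both $A(K)$ and $B(K)$, so that $u$ and $v$ exist; this is the only place where the reduction is not purely formal. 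With that settled, Theorem~\ref{thm:main} yields the unique common fixed point $z$ satisfying $Az=Bz=Sz=Tz=z$.
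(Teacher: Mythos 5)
Your overall route is exactly the paper's: the entire published proof of Theorem~\ref{thm:2.5} is the single line ``define $\psi(a,b)=(a^{p}+rb^{q})^{\lambda}$ in Theorem~\ref{thm:main}'', so your verification that this $\psi$ satisfies Condition-A and your substitution into \eqref{eq:2.1} reproduce what the authors intend. (One small caveat: in the degenerate case $q=0$, $r>0$ the function has $\psi(0,0)=r^{\lambda}\neq 0$ under the usual convention $0^{0}=1$, and reading $0^{0}=0$ instead destroys continuity; this defect is already present in Example~1.4(iv) of the paper, and your ``read the $t$-term as vanishing at the origin'' does not really resolve it.)

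Where you go beyond the paper is in noticing that the hypotheses do not match: Theorem~\ref{thm:2.5} assumes only $T(K)\subset A(K)$, $S(K)\subset B(K)$ and completeness of one of $T(K),S(K),A(K),B(K)$, while Theorem~\ref{thm:main} demands the closure inclusions $\overline{T(K)}\subset A(K)$ and $\overline{S(K)}\subset B(K)$. This mismatch is real, the paper's one-line proof silently ignores it, and you are right that it is the only non-formal step in the reduction. However, your proposed repair is not quite correct as stated: if, say, $T(K)$ is complete, then $z\in T(K)\subset A(K)$ does give $u$ with $Au=z$, but closedness of $T(K)$ together with the plain inclusion $S(K)\subset B(K)$ does \emph{not} place $z$ in $B(K)$ --- one only knows $z\in\overline{S(K)}$. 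The standard way to close this gap is to reorder the endgame of the proof of Theorem~\ref{thm:main}: first produce $u$ with $Au=z$, use \eqref{eq:2.1} to conclude $Su=z$, and only then observe that $z=Su\in S(K)\subset B(K)$ to obtain $v$ with $Bv=z$ (and argue symmetrically in the other three completeness cases). With that reordering your reduction goes through; without it, the case analysis you sketch cannot be completed as described.
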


\begin{proof}
Define $\psi(a, b) = (a^p + r b^q)^\lambda$ in Theorem \ref{thm:main}.
\end{proof}

\begin{remark}\label{rem:2.6}
\begin{enumerate}
\item[(i)] By Lemma \ref{lem:1.11}, for $S = T$, $p = \lambda = 1$ and $r = q = 0$, Theorem 2.1 of \cite{Zhang2008} follows as a corollary to Theorem \ref{thm:2.5} under the additional assumption that $\phi : \mathbb{R}_{+} \to \mathbb{R}_{+}$ is upper semi continuous from the right.

\item[(ii)] For $\phi(t) = rt$, for some $r \in [0, 1)$, $p = \lambda = 1$ and $r = q = 0$, Theorem \ref{thm:2.5} modifies Corollary 3.2 of \cite{Jungck1986} and Corollary 2.1 of \cite{Jungck1994} if we take $X = K$.

\item[(iii)] For $\phi(t) = rt$, for some $r \in [0, 1)$, $p = \lambda = 1$ and $r = q = 0$, Theorem \ref{thm:2.5} extends Theorem 2.1 of \cite{AlThagafi2006}.

\item[(iv)] For $p = \lambda = 1$ and $r = q = 0$, $B = A$ and $S = T$, Theorem 1 of \cite{Pant1994} and Theorem 3.6 of \cite{Murthy1998} are corollaries to Theorem \ref{thm:2.5} under the additional assumption that $\phi$ is monotone increasing in their results.

\item[(v)] If we choose $S = T$ and $B = A = I$, the identity mapping, in Theorem \ref{thm:2.5}, for various choices of $p, q, r$ and $\lambda$ in Theorem \ref{thm:2.5} we can obtain the results of various authors as corollaries including Banach contraction theorem. For example, by taking $S = T$, $B = A = I$, the identity mapping, $p = 1$, $r = 0$, $q = 1$, $\lambda = 1$ in Theorem \ref{thm:2.5}, we obtain a generalization of some results of \cite{Bianchini1972}, \cite{Kannan1968}, \cite{Reich1971} and \cite{Rhoades1977}.
\end{enumerate}
\end{remark}

\section{Conclusion}
In this work, we established novel common fixed point theorems for weakly compatible mappings using auxiliary functions of two variables. Our results significantly generalize existing theorems by:  

\begin{itemize}  
\item Relaxing the continuity and completeness assumptions, requiring only the completeness of \emph{one} image subset (e.g., $\overline{T(K)}$ or $\overline{S(K)}$).  
\item Employing a flexible auxiliary function $\psi \in \Psi$ to unify diverse contraction conditions, as illustrated in Example 1.4 and Theorem 2.5.  
\item Demonstrating that weak compatibility (Definition 1.10) suffices, replacing stricter notions like compatibility or weak compatibility (Definition 1.7).  
\end{itemize}  

Theorems 2.1 and 2.5, along with their corollaries, extend and refine prior work from \cite{AlThagafi2006, Jungck1986, Jungck1994, Murthy1998, Pant1994, Zhang2008} while maintaining minimal assumptions.

\end{document}